\newtheorem{theorem}{Theorem}
\newtheorem{definition}[theorem]{Definition}
\newtheorem{lemma}[theorem]{Lemma}
\newtheorem{proposition}[theorem]{Proposition}
\newtheorem{remark}[theorem]{Remark}
\newcommand{\pie}{\Pi E}
\begin{document}
	\title{Decomposition of matrices
	into product of idempotents and separativity of regular rings}
	\author{S. K. Jain and A. Leroy}
	\address{S. K. Jain\\
		Department of Mathematics\\
		Ohio University, USA\\
		Email:jain@ohio.edu\\
		Andre Leroy\\
		Universit\'e d'Artois\\
		Lens, France\\
		Email:andre.leroy@univ-artois.fr}
	
	\keywords{Idempotent, Singular Matrix,  Regular Ring, totally nonnegative\\
	{\it Mathematical subject Classifications}: 15A23, 15B33, 16E30.}

\maketitle

\centerline{\it Dedicated to Abraham Berman on his $80^{th}$ birthday.}
	
\begin{abstract}
	Following O'Meara's result [Journal of Algebra and Its Applications Vol~\textbf{13}, No. 8 (2014)], it follows that the block matrix 
	$
	A=\begin{pmatrix}
		B & 0 \\
		0 & 0
	\end{pmatrix} \in M_{n+r}(R)
	$, $B\in M_n(R)$, $r\ge 1$, 
	over a von Neumann regular separative ring $R$, 
	is a product of idempotent matrices. Furthermore, this decomposition into idempotents of $A$ also holds when $B$ is an invertible matrix and $R$ is a GE ring (defined by Cohn [New mathematical monographs: {\bf 3}, Cambridge University Press (2006)]). As a consequence, it follows that if there exists an example of a von Neumann regular ring $R$ over which the matrix
	$
	A=\begin{pmatrix}
		B & 0 \\
		0 & 0
	\end{pmatrix} \in M_{n+r}(R)
	$
	where $B\in M_n(R)$, $r\ge 1$  
	, cannot be expressed as a product of idempotents,
	then $R$ is not separative, thus providing an answer to an open question whether there exists a von Neumann regular ring which is not separative.
	The paper concludes with an example of an open question whether every totally nonnegative matrix is a product of nonnegative idempotent matrices.
	\end{abstract}

	\vspace{0.4cm}

	
	\section{Introduction and notation}
	Decompositions of singular matrices over a field or division ring (cf. J. Erdos, Laffey) as product of idempotent matrices led to the study of the decomposition of certain elements of a ring as a product of idempotents. 
	There is a plenty of literature related to this question, where  some of these are concerned with semigroups, nonegative matrices, or von Neumann regular rings (cf.  Gould, Jain-Leroy, O'Meara).
All rings are unital.   $U(R)$, $E(R)$, and $\pie (R)$ denote the set of units, the set of idempotents and the set of elements in $R$ that are product of idempotents, respectively.

We now give some basic facts about the decompositions of zero divisors of a ring into products of idempotents. 

If an element $a$ of a ring can be written as product of idempotents then both its left and right annihilators are nonzero.   This leads to the study of the following property
$$
\forall a\in R,\;  l(a)\ne 0  \Leftrightarrow r(a)\ne 0  \quad \quad\quad \quad  (*)
$$
We note that unit regular rings, artinian rings, matrix rings over quasi euclidian rings, amongst others, have this property (*) (cf. \cite{FL})

For example, for any ring $S$, consider the matrix 
$A=\begin{pmatrix}
	x & y \\
	0 & 0
\end{pmatrix}\in R=M_2(S)$.   Clearly $l(A)\ne 0$ and, if $A$ is a product of idempotent matrices, we must have $r(A)\ne0$.  Of course, this condition is not sufficient to write $A$ as a product of idempotents.   For instance, if $S=\mathbb Q[X,Y]$ and $X=x, Y=y$ we have $XY \in XS\cap YS$.   But it is well known that, in this case, the matrix $A$ is not a product of idempotent matrices (\cite{bb}). 

We collect some basic facts around matrices over any ring with a zero row.  These matrices will be the main topic of the next section.  For any matrix $A\in M_n(R)$ we write $A_i=(1-e_{ii})A$, the matrix obtained from $A$ by replacing its $i^{th}$ row by a zero row.

\section{Preliminaries}

\begin{lemma}
\label{first remarks}
	\begin{enumerate}
		\item Suppose $u\in U(R)$, $e\in E(R)$, and $r\in eR$ are such that $eu, u^{-1}r \in \pie (R)$ then  $r, ru, ur \in \pie (R)$.  
		\item Let $A\in M_n(R)$ with rows 
		$L_1,\dots , L_n$ be such that 
		$L_i=\Sigma_{j\ne i} \alpha_j L_j$ with $1-\Sigma_{j\ne i} \alpha_j\in U(R)$.   Then if $A_i=(1-e_{ii})A$ is a product of idempotents, the same is true for $A$.
		\item Let $R$ be a projective free ring then any matrix $A\in M_n(R)$ which is a product of proper idempotent matrices is similar to a matrix with a zero row (column).
	\end{enumerate}
\end{lemma}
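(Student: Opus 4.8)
The common engine is that $\pie(R)$ is closed under multiplication: a product of two elements of $\pie(R)$ is again a product of idempotents. I would record this first and lean on it throughout. For (1), since $r\in eR$ we have $er=r$, so $r=er=e(uu^{-1})r=(eu)(u^{-1}r)$, and as $eu,u^{-1}r\in\pie(R)$ this factorization exhibits $r$ as a product of two elements of $\pie(R)$, whence $r\in\pie(R)$. I would reduce $ru$ and $ur$ to the same pattern, writing $ru=(eu)(u^{-1}ru)$ and $ur=(ueu)(u^{-1}r)$; the residual task is to verify that the inner factors $u^{-1}ru$ and $ueu$ still lie in $\pie(R)$. I expect this bookkeeping (which uses the idempotency of $e$, and in the intended applications the stronger fact that $r$ is fixed by $e$ on both sides) to be the only genuinely fiddly point of part (1).

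For (2) I would avoid units altogether and exhibit $A$ directly. Put $e=1-e_{ii}\in E(M_n(R))$, so that $A_i=eA$, and introduce the matrix $P=(1-e_{ii})+\sum_{j\ne i}\alpha_j e_{ij}\in M_n(R)$. A short computation with matrix units shows $P^2=P$ (the cross terms vanish because $e_{kk}e_{ij}=0$ and $e_{ij}e_{im}=0$ whenever $k,j\ne i$), so $P$ is a proper idempotent. Moreover $PA_i=A$: the rows of $A_i$ indexed by $k\ne i$ are $L_k$ and its $i$th row is $0$, so the $i$th row of $PA_i$ is $\sum_{j\ne i}\alpha_j L_j=L_i$ while all other rows are left unchanged. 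Hence $A=P\cdot A_i$ is the product of an idempotent with an element of $\pie(R)$, so $A\in\pie(R)$. Notably, this direct construction does not require the hypothesis $1-\sum_{j\ne i}\alpha_j\in U(R)$; that invertibility is what one would invoke to run the argument instead through part (1), taking $u$ to be the transvection $I+\sum_{j\ne i}\alpha_j e_{ij}$, for which $eu=e\in\pie(R)$.

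For (3) I would argue module-theoretically, working with the free right module of row vectors $R^{1\times n}$. Write $A=E_1\cdots E_k$ with each $E_t$ a proper idempotent. Since $E_1^2=E_1$ there is a direct sum decomposition $R^{1\times n}=R^{1\times n}(1-E_1)\oplus R^{1\times n}E_1$, and because $R$ is projective free both summands, being finitely generated projective, are free; as $E_1\ne I$ the first summand $R^{1\times n}(1-E_1)$ has rank at least $1$. Choose a basis of $R^{1\times n}$ adapted to this decomposition, and let $U\in M_n(R)$ be the matrix whose rows are these basis vectors; being a change of basis of the free module $R^{1\times n}$, $U$ is invertible. Its first row $w$ lies in $R^{1\times n}(1-E_1)$, so $wE_1=0$ and hence $wA=wE_1E_2\cdots E_k=0$. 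Then the first row of $UA$ is $wA=0$, and the first row of $UAU^{-1}$ is $(wA)U^{-1}=0$, so $A$ is similar to a matrix with a zero row. The zero-column statement is the mirror argument, applied to the last factor $E_k$ and to column vectors, with right annihilators in place of left ones.

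The substantive point, and the only place projective freeness is essential, is the passage from ``$E_1$ is a proper idempotent'' to an invertible $U$ carrying $wA=0$ in its first row: one must know that the summands of $R^{1\times n}$ cut out by $E_1$ are free and that a basis of a free direct summand extends to a basis of the ambient free module (equivalently, that the relevant unimodular row is completable to an invertible matrix). Over a projective-free ring this is precisely what the decomposition into free summands supplies, so here the difficulty is conceptual rather than computational. In (1) the analogous obstacle is the one-sided bookkeeping for $ru$ and $ur$ flagged above, while (2) is then immediate from the explicit idempotent $P$.
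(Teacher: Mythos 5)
Part (1) of your proposal has a genuine gap. You correctly obtain $r=(eu)(u^{-1}r)\in \pie (R)$ from multiplicative closure, but you leave the cases $ru$ and $ur$ as unverified ``bookkeeping'', and the factorization you propose for $ur$, namely $ur=(ueu)(u^{-1}r)$, forces you to show $ueu\in \pie (R)$. That does not follow from the hypotheses: $ueu=u\cdot(eu)$ is a unit times an element of $\pie (R)$, and such a product need not lie in $\pie (R)$. The missing idea --- the one the paper's proof relies on --- is that $\pie (R)$ is closed under conjugation by units (conjugate each idempotent factor: if $r=f_1\cdots f_k$ then $u^{-1}ru=(u^{-1}f_1u)\cdots(u^{-1}f_ku)$). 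With that, $u^{-1}ru\in \pie (R)$ because $r\in \pie (R)$, hence $ru=eru=(eu)(u^{-1}ru)\in \pie (R)$, and then $ur=u(ru)u^{-1}\in \pie (R)$ immediately. Since you only record multiplicative closure, part (1) is incomplete as written; note also that your aside that $r$ is ``fixed by $e$ on both sides'' is not available, as $r\in eR$ gives only $er=r$.

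Parts (2) and (3) are correct. For (2) your route is genuinely different from the paper's: you exhibit the explicit idempotent $P=(1-e_{ii})+\sum_{j\ne i}\alpha_j e_{ij}$ with $P^2=P$ and $PA_i=A$, so $A\in \pie (R)$ in one step, whereas the paper writes $A=uA_i$ for a suitable unit $u$ and funnels the claim through part (1). Your argument is cleaner, avoids part (1) entirely, and, as you observe, does not use the hypothesis $1-\sum_{j\ne i}\alpha_j\in U(R)$, so it proves a slightly stronger statement. For (3) your module-theoretic argument is essentially the paper's proof in different clothing: the paper takes the first idempotent factor $E$ with $A=EA$, uses projective freeness to find $P$ with $PEP^{-1}$ diagonal with $0$'s and $1$'s, and reads off a zero row of $PAP^{-1}=(PEP^{-1})(PAP^{-1})$; your basis of $R^{1\times n}$ adapted to $R^{1\times n}(1-E_1)\oplus R^{1\times n}E_1$ is exactly that similarity.
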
 
\begin{proof}
(1) We have $r=er=euu^{-1}r$.  Hence our hypothesis shows that $r \in \pie$.  We also have $u^{-1}ru \in \pie$ and hence $ru=eru=euu^{-1}ru\in \pie$.  Finally $ur=u(ru)u^{-1}\pie$.

(2) Let $u^{-1}=\Sigma_{j\ne i} e_{jj} + (1-\Sigma_{j\ne i}\alpha_j)e_{ii}\in Gl_n(R)$.  We then have have  $u^{-1}A=A_i=(1-e_{ii})A$ and we check that the result follows by applying (1).

(3) If $A$ is a product of idempotent matrices then there exists $E=E^2\in M_n(R)$ such that $A=EA$.   Since $R$ is projective free there exists an invertible matrix $P\in M_n(R)$ such that $PEP^{-1}$ is a diagonal matrix with zero and one on the diagonal. 
We thus have $PAP^{-1}= PEP^{-1}PAP^{-1}$ and hence $PAP^{-1}$ has a zero row. 
\end{proof}

The first statement of Lemma \ref{first remarks} shows that it is important to know when an invertible element $u\in U(R)$ and an idempotent element $e=e^2$ are such that $eu \in \pie $.  We will give two examples of such a behavior in the frame of matrix rings.  Remark first that a matrix in $M_n(R)$ with its $i^{th}$ row zero belongs to $(1-e_{ii})M_n(R)$. 
Let us recall that a matrix $A\in M_n(R)$ is a permutation matrix if it each row and each column has only one nonzero entry equal to $1$. 
\begin{definition}
A matrix $A\in M_n(R)$ is a quasi permutation matrix (resp. quasi elementary matrix) if there exists $1\le i \le n$ and a permutation matrix $P$ (resp. elementary matrix $Q$) such that either $A=(1-e_{ii})P$ or $A=P(1-e_{ii})$ (resp $A=(1-e_{ii})Q$ or  $A=Q(1-e_{ii})$).
\end{definition}

It was proved in \cite{AJL1} that quasi permutation matrices are products of idempotent matrices.  Here we prove the similar result for quasi elementary matrices.

\begin{proposition}
	\label{Quasi permutation and quasi elementary}
	A quasi elementary matrix is a product of idempotent matrices. 
\end{proposition}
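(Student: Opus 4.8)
The plan is to reduce the statement to a single genuinely non-idempotent shape and then exhibit an explicit two-factor decomposition for it. Write the elementary matrix as $Q = 1 + \alpha e_{kl}$ with $k \neq l$ and $\alpha \in R$, so that a quasi elementary matrix is one of $(1-e_{ii})Q$ or $Q(1-e_{ii})$. I would first dispose of the case $i \in \{k,l\}$ by a direct matrix-unit computation: using the relations $e_{ab}e_{cd} = \delta_{bc}e_{ad}$ one checks that $(1-e_{kk})Q = 1 - e_{kk}$ and $Q(1-e_{ll}) = 1 - e_{ll}$, while $(1-e_{ll})Q = (1-e_{ll}) + \alpha e_{kl}$ and $Q(1-e_{kk}) = (1-e_{kk}) + \alpha e_{kl}$. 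In each of these four situations the resulting matrix squares to itself, so $A$ is already idempotent and a fortiori lies in $\pie(R)$.

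The remaining case is $i \notin \{k,l\}$, which forces $n \geq 3$ and makes $i,k,l$ pairwise distinct. Here the extra zeroed row (or column) no longer meets the support of $\alpha e_{kl}$, and a short computation shows that both $(1-e_{ii})Q$ and $Q(1-e_{ii})$ collapse to the same matrix $M := (1-e_{ii}) + \alpha e_{kl}$. This $M$ is not idempotent, since squaring it doubles the off-diagonal entry, so it must be genuinely factored. The heart of the argument is the identity
\[
M = (1-e_{ii}) + \alpha e_{kl} = \bigl[(1-e_{ii}) + e_{ki}\bigr]\,\bigl[(1-e_{ii}) + \alpha e_{il}\bigr],
\]
whose idea is to route the entry $\alpha$ through the free index $i$: the right factor parks $\alpha$ in position $(i,l)$ and the left factor transports row $i$ into row $k$.

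I would then verify, again via $e_{ab}e_{cd} = \delta_{bc}e_{ad}$, that each bracketed factor is idempotent, noting that they are instances of the shapes already treated in the first case, their distinguished index $i$ being respectively the column index of $e_{ki}$ and the row index of $e_{il}$. I would also check that their product returns $M$, the only surviving cross term being $e_{ki}(\alpha e_{il}) = \alpha e_{kl}$ while $(1-e_{ii})(\alpha e_{il})$ and $e_{ki}(1-e_{ii})$ vanish. Since both quasi elementary matrices coincide with $M$ in this case, this single decomposition settles both shapes at once, and no appeal to Lemma \ref{first remarks} is needed within the proof itself.

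The one place demanding care, and the main obstacle, is finding the factorization in the distinct-index case. A naive split such as $M = (1-e_{ii})\cdot Q$ fails because $Q$ is a non-idempotent transvection, and any factorization that keeps the two diagonal $1$'s at positions $k$ and $l$ inside a common factor reproduces the doubled off-diagonal entry. The decomposition above avoids this precisely by using the vanishing row (column) $i$ as scratch space, and this is the step I would isolate and verify first.
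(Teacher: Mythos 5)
Your proof is correct, but it takes a genuinely different route from the paper's. The paper first notes that a quasi elementary matrix is similar to one of the form $(I_n-e_{nn})Q$ with the zero row in the last position, disposes of the case $j=n$ by observing that $(I_n-e_{nn})(I_n+be_{in})=I_n+be_{in}-e_{nn}$ is already idempotent, and in the remaining case writes the matrix as a product of \emph{three} idempotents in block form, the middle factor being a bordered idempotent built from $I_{n-1}+be_{ij}$ together with an extra row $-be_j$ and column $e_i^T$. You instead work directly with matrix units and general indices: after checking that all four subcases with $i\in\{k,l\}$ already yield idempotents, you observe that for $i\notin\{k,l\}$ both $(1-e_{ii})Q$ and $Q(1-e_{ii})$ collapse to $M=(1-e_{ii})+\alpha e_{kl}$, and you factor $M$ as a product of just \emph{two} idempotents,
\[
M=\bigl[(1-e_{ii})+e_{ki}\bigr]\bigl[(1-e_{ii})+\alpha e_{il}\bigr],
\]
each factor being one of the idempotent quasi elementary shapes from your first case. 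I checked the computations ($E^2=E$, $F^2=F$, $EF=M$, using $e_{ab}e_{cd}=\delta_{bc}e_{ad}$ and the pairwise distinctness of $i,k,l$) and they are all correct. Your argument is more economical: it avoids the similarity reduction, treats the row-deleted and column-deleted versions simultaneously, and gives a length-two rather than length-three idempotent factorization. Conceptually the two proofs exploit the same idea of routing the entry $\alpha$ through the deleted index as scratch space; you do it inside $M_n(R)$ with matrix units, while the paper does it through the border of its block decomposition.
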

\begin{proof}
	 Assume that $A$ is a quasi elementary matrix.   We note that a quasi elementary matrix always has a zero row and a zero column.  Such a matrix is always similar to a matrix having last row zero.  In other words, we may assume that our quasi elementary matrix $A$ is of the form $A=(I_{n}-e_{nn})Q$ where $Q$ is an elementary matrix of the form $Q=I_{n}+be_{ij}$, where $1\le i \ne j\le n$.   If $j=n$, then $A=(I_{n}-e_{nn})(I_{n}+be_{in})=I_{n}+be_{in}-e_{nn}$.  This matrix is easily seen to be idempotent.  If $j\ne n$, we have   
	$$
	A=\begin{pmatrix}
		I_{n-1} + be_{ij} & 0 \\
		0 & 0
	\end{pmatrix}=\begin{pmatrix}
		I_{n-1} & 0 \\
		0 & 0
	\end{pmatrix}\begin{pmatrix}
		I_{n-1}+ be_{ij} & e_i^T \\
		-be_j & 0
	\end{pmatrix}\begin{pmatrix}
		I_{n-1} & 0 \\
		0 & 0
	\end{pmatrix}
	$$
	We can easily check that all the three factors are idempotent matrices. 
\end{proof}

\begin{proposition}
\label{permutation and elementary matrices acting on a zero row matrix}
	Let $A\in M_n(R)$ be a matrix with a zero row (resp. column) and $Q$ be an elementary matrix or a permutation matrix.  Then 
	$A$ is a product of idempotent matrices if and only if $QA$ (resp. $AQ$) is a product of idempotent matrices.   
\end{proposition}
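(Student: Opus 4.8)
My plan is to reduce the statement to two facts already available: that a zero row (resp.\ column) of $A$ lets me absorb an idempotent $1-e_{ii}$ (resp.\ $1-e_{jj}$) against $A$, and that $\pie(M_n(R))$ is closed under multiplication, since a product of two products of idempotents is again a product of idempotents. I will carry out the ``zero row / $QA$'' case; the ``zero column / $AQ$'' case is entirely symmetric, obtained by multiplying on the right (or by transposing).

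So suppose the $i$-th row of $A$ vanishes, equivalently $A=(1-e_{ii})A$. For the forward implication I would assume $A\in\pie$ and factor
$$ QA = Q(1-e_{ii})A = \bigl[Q(1-e_{ii})\bigr]\,A. $$
The first factor $Q(1-e_{ii})$ is a quasi elementary matrix when $Q$ is elementary, and a quasi permutation matrix when $Q$ is a permutation matrix; in either case it lies in $\pie$ by Proposition~\ref{Quasi permutation and quasi elementary} together with the result of \cite{AJL1}. Multiplying it by $A\in\pie$ and using closure under products gives $QA\in\pie$. For the converse I would assume $QA\in\pie$ and factor instead
$$ A = (1-e_{ii})A = (1-e_{ii})Q^{-1}QA = \bigl[(1-e_{ii})Q^{-1}\bigr]\,(QA), $$
where now $(1-e_{ii})Q^{-1}$ is the matrix known to be in $\pie$; the same closure argument then yields $A\in\pie$. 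This converse is precisely Lemma~\ref{first remarks}(1) applied with $u=Q^{-1}$, $e=1-e_{ii}$ and $r=A\in eM_n(R)$, since then $eu=(1-e_{ii})Q^{-1}\in\pie$ and $u^{-1}r=QA\in\pie$ force $r=A\in\pie$.

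The calculations themselves are routine, so the real content sits in two observations, and these are where I would be most careful. First, the hypothesis of a zero row is exactly what makes $A=(1-e_{ii})A$, allowing the factor $1-e_{ii}$ to be inserted; without it the quasi matrices would not be available. Second, I must record that the inverse of an elementary matrix is elementary and the inverse of a permutation matrix is a permutation matrix, so that $(1-e_{ii})Q^{-1}$ genuinely falls under the scope of Proposition~\ref{Quasi permutation and quasi elementary} and \cite{AJL1}. I do not anticipate a substantial obstacle; the only bookkeeping worth attention is tracking on which side the zero line sits when transcribing the argument to the ``zero column / $AQ$'' case, where one uses $A=A(1-e_{jj})$ and the quasi matrices $(1-e_{jj})Q$ and $Q^{-1}(1-e_{jj})$ on the right of $A$.
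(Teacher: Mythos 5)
Your proof is correct, and its converse direction is exactly the paper's: write $A=(1-e_{ii})A=\bigl[(1-e_{ii})Q^{-1}\bigr](QA)$ and invoke Proposition~\ref{Quasi permutation and quasi elementary} for the quasi elementary matrix $(1-e_{ii})Q^{-1}$. Where you genuinely diverge is the forward direction. The paper proves it by a case analysis on the position $(k,l)$ of the nonzero off-diagonal entry of $Q=I+ae_{kl}$ relative to the zero row $i$: for $i\notin\{k,l\}$ it checks $QA=(1-e_{ii})QA$ row by row, for $i=k$ it exhibits the explicit idempotent $I-e_{ii}+ae_{il}$ with $QA=(I-e_{ii}+ae_{il})A$, and for $i=l$ it observes $QA=A$; the permutation case is left to the reader. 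You instead absorb the zero row uniformly, $QA=\bigl[Q(1-e_{ii})\bigr]A$, and cite Proposition~\ref{Quasi permutation and quasi elementary} (resp.\ \cite{AJL1}) for the factor $Q(1-e_{ii})$, which is quasi elementary (resp.\ quasi permutation) by the paper's own definition. This buys symmetry between the two implications, eliminates the case split, and handles elementary and permutation matrices in one stroke; the cost is that you lean on the full strength of the quasi-elementary proposition (in particular on the form $Q(1-e_{ii})$, which its proof treats only implicitly via a similarity reduction), whereas the paper's forward direction is self-contained and produces the idempotent factors explicitly. Your appeal to Lemma~\ref{first remarks}(1) with $u=Q^{-1}$, $e=1-e_{ii}$, $r=A$ is a correct, if slightly redundant, packaging of the closure of $\pie$ under products.
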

\begin{proof}
	First suppose that the matrix $A$ has its $i^{th}$ row zero and is a product of idempotent matrices.
%
	Let $Q=Q_{k,l}=Id +ae_{kl}$ with $a\in R$ be an elementary matrix.  If $i\notin \{k,l\}$ then comparing the rows of the matrices on both sides we get that $Q_{k,l}A=(id -e_{ii})Q_{k,l}A$.  If $i=k$, we have $Q_{i,l}A=
	(Id.-e_{ii} + ae_{il})A$ (remark that $Id.-e_{ii} + ae_{il}$ is an idempotent).  Finally if $i=l$ we have $Q_{k,i}A=A$ .  In the three cases we conclude that if $A$ is a product of idempotents then the same is true for $Q_{k,l}A$.  Conversely, noting that
	$Q^{-1}$ is also an elementary matrix, we know, by Proposition \ref{Quasi permutation and quasi elementary}, that the quasi elementary matrix $(1-e_{ii})Q^{-1}$ is a product of idempotent matrices.  Since the $i^{th}$ row of $A$ is a zero row, we have 
	$A=(Id-e_{ii})A=(1-e_{ii})Q^{-1}QA$.  This shows that if $QA$ is a 
	product of idempotent matrices, the same is true for $A$.  
	
	We leave the proof of the case when $Q$ is a permutation matrix to the reader. 
\end{proof}

\section{Main results}

We now consider a matrix $P\in M_{n+1}(R)$ 
of the form $\begin{pmatrix}
B & C \\
0 & 0
\end{pmatrix}$.
\begin{theorem}
\label{3 statements}
	\begin{enumerate}
		\item If $B\in M_n(R)$ is a product of idempotent matrices then so is $\begin{pmatrix}
			B & C \\
			0 & 0
		\end{pmatrix}$, for any column $C$.
		\item If $B\in M_n(R)$ is invertible then $\begin{pmatrix}
			B & C \\
			0 & 0
		\end{pmatrix}$ is similar to the matrix $\begin{pmatrix}
			B & 0 \\
			0 & 0
		\end{pmatrix}$.
	\item If the matrix $\begin{pmatrix}
		B & 0 \\
		0 & 0
	\end{pmatrix}$ is a product of idempotents then 
 so is the matrix $\begin{pmatrix}
 	B & C \\
 	0 & 0
 \end{pmatrix}$ where $C=BQ$ for some column $Q\in M_{n1}(R)$
	\end{enumerate}
\end{theorem}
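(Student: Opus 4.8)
For the final statement, part (3), the plan is to absorb the extra column $C=BQ$ into a single idempotent factor, so that the target becomes the matrix already known to be a product of idempotents multiplied by one further idempotent. The feature I want to exploit is that the hypothesis $C=BQ$ places the new column in the column space of $B$, which should let me factor $Q$ out to the right.

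First I would record the block identity
\begin{equation*}
\begin{pmatrix} B & BQ \\ 0 & 0 \end{pmatrix}=\begin{pmatrix} B & 0 \\ 0 & 0 \end{pmatrix}\begin{pmatrix} I_n & Q \\ 0 & 0 \end{pmatrix},
\end{equation*}
verified by direct block multiplication: the lower block-row of the left factor is zero, and the $(1,2)$ block $BQ$ arises as $B\cdot Q$. Next I would check that $F=\begin{pmatrix} I_n & Q \\ 0 & 0 \end{pmatrix}$ is idempotent, since $F^2$ has top-left block $I_n$, top-right block $I_nQ+Q\cdot 0=Q$, and zero lower row, so $F^2=F$. Because $\begin{pmatrix} B & 0 \\ 0 & 0 \end{pmatrix}\in\pie(M_{n+1}(R))$ by hypothesis and $\pie$ is plainly closed under right multiplication by an idempotent, the product $\begin{pmatrix} B & C \\ 0 & 0 \end{pmatrix}$ lies in $\pie(M_{n+1}(R))$ as well, which is the claim.

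The step I expect to be the crux is recognizing the correct idempotent. The natural temptation is to invoke Proposition \ref{permutation and elementary matrices acting on a zero row matrix} by multiplying the zero-column matrix $\begin{pmatrix} B & 0 \\ 0 & 0 \end{pmatrix}$ on the right by the unipotent matrix $\begin{pmatrix} I_n & Q \\ 0 & 1 \end{pmatrix}$, itself a product of the elementary matrices $I+q_ie_{i,n+1}$. That route stalls, however, because the proposition needs a zero column at each application, and after the very first elementary multiplication the last column is filled in, destroying the hypothesis and blocking the induction. The real content is seeing that the full unipotent matrix is unnecessary: replacing its lower-right $1$ by $0$ yields the idempotent $F$ above while producing exactly the same column $BQ$, which bypasses the iteration altogether.
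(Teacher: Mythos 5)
Your argument for part (3) is correct and is precisely the paper's own proof: the same factorization
\begin{equation*}
\begin{pmatrix} B & BQ \\ 0 & 0 \end{pmatrix}=\begin{pmatrix} B & 0 \\ 0 & 0 \end{pmatrix}\begin{pmatrix} I_n & Q \\ 0 & 0 \end{pmatrix}
\end{equation*}
with the second factor idempotent, and your explanation of why the lower-right entry must be $0$ rather than $1$ (so that one idempotent replaces an unavailable chain of applications of Proposition \ref{permutation and elementary matrices acting on a zero row matrix}) is a fair account of the only subtlety. The one genuine shortfall is coverage: the statement has three parts and you prove only the third. Parts (1) and (2) are of the same one-line block-computation character --- for (1), writing $B=E_1\cdots E_l$ one has
\begin{equation*}
\begin{pmatrix} B & C \\ 0 & 0 \end{pmatrix}=\begin{pmatrix} I & C \\ 0 & 0 \end{pmatrix}\begin{pmatrix} E_1 & 0 \\ 0 & 1 \end{pmatrix}\cdots\begin{pmatrix} E_l & 0 \\ 0 & 1 \end{pmatrix},
\end{equation*}
all factors idempotent, and for (2) one conjugates $\begin{pmatrix} B & 0 \\ 0 & 0 \end{pmatrix}$ by the unipotent matrix $\begin{pmatrix} I & B^{-1}C \\ 0 & 1 \end{pmatrix}$ --- but as written your proposal leaves two of the three assertions unproved.
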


\begin{proof}

(1) $B$ is a product of idempotent matrices, say $B=E_1\cdots E_l$ then
$$
\begin{pmatrix}
	B & C \\
	0 & 0
\end{pmatrix}=\begin{pmatrix}
	I & C \\
	0 & 0
\end{pmatrix}\begin{pmatrix}
	E_1 & 0 \\
	0 & 1
\end{pmatrix}\cdots \begin{pmatrix}
	E_l & 0 \\
	0 & 1
\end{pmatrix}
$$
is a product of idempotent matrices.

(2)  If $B$ is invertible we can write 
$$
\begin{pmatrix}
	B & C \\
	0 & 0
\end{pmatrix}=\begin{pmatrix}
I & -B^{-1}C \\
0 & 1
\end{pmatrix}\begin{pmatrix}
	B & 0 \\
	0 & 0
\end{pmatrix}\begin{pmatrix}
	I & B^{-1}C \\
	0 & 1
\end{pmatrix}
$$ 
%

(3)  If the column $C$ is right linear combination of the columns of $B$, say $C=BQ$, for some column $Q$ then we have 
$$
\begin{pmatrix}
	B & C \\
	0 & 0
\end{pmatrix}=\begin{pmatrix}
B & BQ \\
0 & 0
\end{pmatrix}=\begin{pmatrix}
B & 0 \\
0 & 0
\end{pmatrix}\begin{pmatrix}
I & Q \\
0 & 0
\end{pmatrix}.
$$
\end{proof}
\vspace{3mm}


\begin{remark}
{\rm
 If $B$ is not a product of idempotents (c.f. when $B$ is invertible) it may still happen that the matrix 
$\begin{pmatrix}
	B & 0 \\
	0 & 0
\end{pmatrix}$ is a product of idempotents.  For example just consider the matrix
$B=\begin{pmatrix}
	X & Y \\
	0 & 0
\end{pmatrix}\in M_2(k[X,Y])$.  The matrix $B$ is not a product of idempotents since the ideal generated by $X$ and $Y$ is not principal.  Note $k[X,Y]$ is local and hence projective free, (cf. Lemma 1, \cite{bb}).  But we can write:
$$
\begin{pmatrix}
	X & Y & 0 \\
	0 & 0 & 0 \\
	0 & 0 & 0
\end{pmatrix}=\begin{pmatrix}
	X & 0 & Y \\
	0 & 0 & 0 \\
	0 & 0 & 0
\end{pmatrix}\begin{pmatrix}
	1 & 0 & 0 \\
	0 & 0 & 0 \\
	0 & 1 & 0
\end{pmatrix}
.$$ 
We note
$$
\begin{pmatrix}
	X & 0 & Y \\
	0 & 0 & 0 \\
	0 & 0 & 0
\end{pmatrix}=\begin{pmatrix}
	1 & 0 & Y \\
	0 & 1 & 0 \\
	0 & 0 & 0
\end{pmatrix}\begin{pmatrix}
	1 & X & 0 \\
	0 & 0 & 0 \\
	0 & 0 & 1
\end{pmatrix}\begin{pmatrix}
	0 & 0 & 0 \\
	0 & 1 & 0 \\
	0 & 0 & 1
\end{pmatrix}\begin{pmatrix}
	1 & 0 & 0 \\
	1 & 0 & 0 \\
	0 & 0 & 1
\end{pmatrix},
$$
 is a product of idempotents.

$$
\begin{pmatrix}
	1 & 0 & 0 \\
	0 & 0 & 0 \\
	0 & 1 & 0
\end{pmatrix}=\begin{pmatrix}
	1 & 0 & 0 \\
	0 & 0 & 0 \\
	0 & 0 & 1
\end{pmatrix}\begin{pmatrix}
	1 & 0 & 0 \\
	0 & 1 & 0 \\
	0 & 1 & 0
\end{pmatrix}
$$

Thus the matrix $\begin{pmatrix}
	B & 0 \\
	0 & 0
\end{pmatrix}$ is a product of  idempotent matrices.
}
\end{remark}

\begin{proposition}
	\label{B-A A of rank r}
	Let $A,B\in M_n(R)$ be such that $A=CD$ where $C\in M_{n\times r}(R)$ and $D\in M_{r\times n}(R)$.  Suppose $B+A$ is a product of idempotent matrices.  Then the matrix
	$$
	\begin{pmatrix}
		B & 0 \\
		0 & 0
	\end{pmatrix}\in M_{n+r}(R)
	$$ 
	is also a product of idempotent matrices
\end{proposition}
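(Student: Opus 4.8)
The plan is to sidestep separativity and any similarity argument, and instead to write down an explicit idempotent factorization of $\begin{pmatrix} B & 0 \\ 0 & 0 \end{pmatrix}$ in $M_{n+r}(R)$. The idea is to take a factorization of $B+A=B+CD$ into idempotents of $M_n(R)$ and to flank it by two idempotent blocks of size $n+r$ that record the rectangular factors $C$ and $D$; the middle dimension $r$ is exactly the size needed to house $C\in M_{n\times r}(R)$ and $D\in M_{r\times n}(R)$.

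Concretely, I would first invoke the hypothesis to write $B+CD=E_1\cdots E_l$ with each $E_k=E_k^2\in M_n(R)$, and lift each factor to $\begin{pmatrix} E_k & 0 \\ 0 & I_r \end{pmatrix}$, which is still idempotent and whose product telescopes to $\begin{pmatrix} B+CD & 0 \\ 0 & I_r \end{pmatrix}$. I would then multiply on the right by $\begin{pmatrix} I_n & 0 \\ D & 0 \end{pmatrix}$ and on the left by $\begin{pmatrix} I_n & -C \\ 0 & 0 \end{pmatrix}$, so that the asserted identity reads
\[ \begin{pmatrix} B & 0 \\ 0 & 0 \end{pmatrix} = \begin{pmatrix} I_n & -C \\ 0 & 0 \end{pmatrix} \begin{pmatrix} E_1 & 0 \\ 0 & I_r \end{pmatrix} \cdots \begin{pmatrix} E_l & 0 \\ 0 & I_r \end{pmatrix} \begin{pmatrix} I_n & 0 \\ D & 0 \end{pmatrix}. \]
The rightmost factor turns $\begin{pmatrix} B+CD & 0 \\ 0 & I_r \end{pmatrix}$ into $\begin{pmatrix} B+CD & 0 \\ D & 0 \end{pmatrix}$, and left multiplication by $\begin{pmatrix} I_n & -C \\ 0 & 0 \end{pmatrix}$ then produces the top-left cancellation $(B+CD)-CD=B$ while zeroing out the remaining blocks. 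This construction is in the same spirit as Theorem \ref{3 statements}(1), reinforced by one extra idempotent that performs the cancellation.

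The verifications that remain are purely mechanical: that $\begin{pmatrix} I_n & -C \\ 0 & 0 \end{pmatrix}$ and $\begin{pmatrix} I_n & 0 \\ D & 0 \end{pmatrix}$ are idempotent, and that the displayed product indeed multiplies out to $\begin{pmatrix} B & 0 \\ 0 & 0 \end{pmatrix}$. The only genuine obstacle is conceptual rather than computational: one has to notice that the inner-rank factorization $A=CD$ lets the whole term $A$ be absorbed by a single Schur-type cancellation in the enlarged ring $M_{n+r}(R)$, so that padding the idempotent factorization of $B+A$ by the two rank-$r$ blocks suffices. Once the two flanking idempotents are identified, no appeal to Proposition \ref{permutation and elementary matrices acting on a zero row matrix} or to any regularity hypothesis on $R$ is needed.
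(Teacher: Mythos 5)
Your proof is correct and is essentially identical to the paper's own argument: the paper uses exactly the same two flanking idempotents $\begin{pmatrix} I_n & -C \\ 0 & 0 \end{pmatrix}$ and $\begin{pmatrix} I_n & 0 \\ D & 0 \end{pmatrix}$ around the lifted factors $\begin{pmatrix} E_k & 0 \\ 0 & I_r \end{pmatrix}$, passing through the intermediate matrix $\begin{pmatrix} B+A & 0 \\ D & 0 \end{pmatrix}$. Nothing is missing; the remaining verifications are the routine block computations you indicate.
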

\begin{proof}
	Consider
	$$
	\begin{pmatrix}
		B & 0 \\
		0 & 0
	\end{pmatrix}=\begin{pmatrix}
		I_n & -C \\
		0 & 0
	\end{pmatrix}\begin{pmatrix}
		B+A & 0 \\
		D & 0
	\end{pmatrix}
	$$
	Let us write $B+A=E_1\cdots E_l$ where, for any $1\le i \le n$, $E_i^2=E_i$
	we then have 
	$$
	\begin{pmatrix}
		B+A & 0 \\
		D & 0
	\end{pmatrix}=\begin{pmatrix}
		E_1 & 0 \\
		0 & 1
	\end{pmatrix}\cdots \begin{pmatrix}
		E_l & 0 \\
		0 & 1
	\end{pmatrix}\begin{pmatrix}
		I_n & 0 \\
		D & 0
	\end{pmatrix}.
	$$ 
	This gives the proof.
\end{proof}

As a consequence we have the following proposition.

\begin{proposition}
	\label{somme cases}
	Let $B$ be a matrix in $M_n(R)$.   Then the matrix 	$$
	A=\begin{pmatrix}
		B & 0 \\
		0 & 0
	\end{pmatrix}\in M_{n+r}(R)
	$$ 
	is a product of idempotent matrices in each of the following cases:
	\begin{enumerate}
		\item If there exists $1\le i \le n$ such that $(1-e_{ii})B$ (resp. $B(I_n-e_{ii})$) is a product of idempotents.
		\item If $B$ is a product of elementary matrices.
		\item If $B$ is a permutation matrix.
		\item if $B$ is an upper (resp. lower) triangular matrix.
		In particular, if $B$ is a diagonal matrix.
	\end{enumerate}
\end{proposition}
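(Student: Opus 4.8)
The plan is to treat Proposition~\ref{B-A A of rank r} as the engine and to feed it, in each case, a suitable low-rank perturbation of $B$. Concretely, to conclude that $\begin{pmatrix} B & 0 \\ 0 & 0 \end{pmatrix}\in M_{n+r}(R)$ is a product of idempotents it suffices to produce a factorization $A=CD$ with $C\in M_{n\times r}(R)$, $D\in M_{r\times n}(R)$ such that $B+A$ is a product of idempotent matrices. I first note that any rank-one matrix $A=cd$, with $c$ a column and $d$ a row, can be written as such a $CD$ for every $r\ge 1$ by padding $c$ with $r-1$ zero columns and $d$ with $r-1$ zero rows; hence it is enough to exhibit a rank-$\le 1$ perturbation $A$ with $B+A$ a product of idempotents. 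As will be clear, statement (1) is the workhorse from which (3) and (4) are deduced, while (2) is handled separately.

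For (1) I would take $A=-e_{ii}B$ in the row case, so that $B+A=(1-e_{ii})B$ is a product of idempotents by hypothesis, and $A=(-e_i)(e_i^{T}B)$ is visibly rank one; Proposition~\ref{B-A A of rank r} then applies. The column case is symmetric with $A=-Be_{ii}=(-Be_i)e_i^{T}$ and $B+A=B(1-e_{ii})$. For (3), observe that for any $i$ the matrix $(1-e_{ii})P$ is, by definition, a quasi permutation matrix, hence a product of idempotents by \cite{AJL1}; applying (1) with this $i$ finishes the case.

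For (4) I would argue by induction on $n$, reducing to (1). In the lower triangular case take $A=-e_{nn}B$; since $B$ is lower triangular its last column is supported in the last row, so zeroing the last row kills the last column as well, giving $B+A=(1-e_{nn})B=\begin{pmatrix}\check B & 0\\ 0 & 0\end{pmatrix}$ with $\check B\in M_{n-1}(R)$ lower triangular. By the inductive hypothesis this matrix is a product of idempotents, so (1) with $i=n$ yields the claim (the base case $n=1$ is trivial since $1-e_{11}=0$ there, and the diagonal case is a special instance). For the upper triangular case I would pass to the lower triangular one via the reversal permutation $J$: then $JBJ$ is lower triangular, and $\begin{pmatrix}B & 0\\ 0 & 0\end{pmatrix}$ is permutation-similar, through $\mathrm{diag}(J,I_r)$, to $\begin{pmatrix}JBJ & 0\\ 0 & 0\end{pmatrix}$, which is already known to be a product of idempotents; since both matrices carry a zero row and a zero column, Proposition~\ref{permutation and elementary matrices acting on a zero row matrix} transfers the property across this two-sided permutation similarity.

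Case (2) does not fit the perturbation template, and I would instead embed and peel off elementary factors. Writing $B=Q_1\cdots Q_m$ with each $Q_j$ elementary, I use
$$
\begin{pmatrix} B & 0 \\ 0 & 0 \end{pmatrix}=\begin{pmatrix} Q_1 & 0 \\ 0 & I_r \end{pmatrix}\cdots\begin{pmatrix} Q_m & 0 \\ 0 & I_r \end{pmatrix}\begin{pmatrix} I_n & 0 \\ 0 & 0 \end{pmatrix}\in M_{n+r}(R),
$$
where the rightmost factor is an idempotent whose last $r$ rows vanish and each $\begin{pmatrix} Q_j & 0\\ 0 & I_r\end{pmatrix}$ is an elementary matrix acting only on the first $n$ rows; a zero bottom row therefore persists after each left multiplication, so repeated application of Proposition~\ref{permutation and elementary matrices acting on a zero row matrix} shows the whole product is a product of idempotents. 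The main obstacles I anticipate are exactly these two departures from the uniform scheme: case (2) requiring the separate embedding argument, and the upper triangular half of (4) requiring the two-sided permutation transfer. In both, the recurring delicate point is to keep a genuine zero row (or column) available at every step so that Propositions~\ref{B-A A of rank r} and \ref{permutation and elementary matrices acting on a zero row matrix} remain applicable.
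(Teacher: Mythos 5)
Your proof is correct, and its engine is the paper's: case (1), and through it the induction in case (4), come from feeding the rank-one perturbation $-e_iB_i$ (resp.\ $-Be_ie_i^T$) into Proposition~\ref{B-A A of rank r}, exactly as in the paper. The differences are in the surrounding plumbing, and they are genuine if minor. Where the paper first reduces everything to $r=1$ by nesting block matrices, you handle general $r$ at once by zero-padding the rank-one factorization to an $n\times r$ times $r\times n$ one; both are sound, and yours removes a preliminary step. For (2) the paper simply observes that $\bigl(\begin{smallmatrix}B&0\\0&0\end{smallmatrix}\bigr)$ is a product of quasi elementary matrices and cites Proposition~\ref{Quasi permutation and quasi elementary}, whereas you peel off the embedded elementary factors one by one using Proposition~\ref{permutation and elementary matrices acting on a zero row matrix}; your route requires (and you correctly supply) the observation that the zero bottom rows persist under each left multiplication, while the paper's is a one-line appeal to an earlier result. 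For (3) the paper notes that the enlarged matrix is itself a quasi permutation matrix, while you route through (1) via $(1-e_{ii})P$ and the cited result of \cite{AJL1} --- slightly longer but equally valid. For the upper triangular half of (4) the paper applies the column version of (1) directly, whereas you reduce to the lower triangular case by conjugating with the reversal permutation and transferring the property through Proposition~\ref{permutation and elementary matrices acting on a zero row matrix}; this works because the needed zero row and zero column are both present at each stage, but it is a detour that the column version of (1) renders unnecessary. Finally, your base case $n=1$ via $(1-e_{11})B=0$ being an idempotent replaces the paper's explicit three-factor $2\times 2$ decomposition; both are fine.
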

\begin{proof}
It is enough to consider the case when $r=1$.  Indeed if $r>1$ and we assume the result true for $r=1$,  then
the matrix $A'=\begin{pmatrix}
	B & 0 \\
	0 & 0
\end{pmatrix}\in M_{n+1}(R)$ is a product of idempotent matrices.  Hence $A=\begin{pmatrix}
A' & 0 \\
0 & 0
\end{pmatrix}$ is a product of idempotent matrices.
	
	(1) Let $\tilde{B}=(1-e_{ii})B$ be the matrix obtained by replacing the $i^{th}$ row of $B$ by a zero row.  We can then write $\tilde{B}=B - e_iB_i$, where $B_i$ is the $i^{th}$ row of $B$ and $e_i$ is the column with all entries zero except the $i^{th}$ one which is $1$.   Proposition \ref{B-A A of rank r} then applies and gives the case when $r=1$.  
	
	
	(2) It is clear that $A$ is in fact a product of quasi elementray matrices and hence Proposition \ref{Quasi permutation and quasi elementary} shows that $A$ is a product of idempotent matrices.
	
	(3) Since $A$ is clearly a quasi permutation matrix, it is a product of idempotent matrices.
	
	(4) We consider only the case of upper triangular matrices.  The proof proceeds by induction on $n\ge 1$.  If $n=1$ we have the following decomposition
$$
\begin{pmatrix}
	B & 0 \\
	0 & 0
\end{pmatrix}=\begin{pmatrix}
	1 & B \\
	0 & 0
\end{pmatrix}\begin{pmatrix}
	0 & 0 \\
	1 & 1
\end{pmatrix}\begin{pmatrix}
	1 & 0 \\
	0 & 0
\end{pmatrix}
$$
	Assuming the result for upper triangular matrices of size $n-1$, we consider the case of a matrix 
	$A=\begin{pmatrix}
		B & 0 \\
		0 & 0
	\end{pmatrix}$
	where $B\in M_n(R)$ is upper triangular.  We consider, as in (1) above, the matrix $\tilde{B}=B(I_n-e_{nn})$ obtained by replacing the last column of $B$ by zeros.  Since $\tilde{B}$ is upper triangular, the induction hypothesis shows that the matrix $\tilde{B}$ is a product of idempotent and hence (1) above shows that $A$ is also a product of idempotent matrices.
%
\end{proof}

\vspace{3mm}

%

Reacall that $R$ is a separative ring if for any finitely projective right modules $M$ and $N$, $M\oplus N\cong N \oplus N \cong M\oplus M$ implies that $M\cong N$.

\begin{theorem}
	\label{von neumann separative}
	If the ring $R$ is a von Neumann regular separative ring then for any matrix $B\in M_n(R)$ the matrix 
	$$
	\begin{pmatrix}
		B & 0 \\
		0 & 0
	\end{pmatrix}\in M_{n+r}(R), \; r>0
	$$
	is a product of idempotent matrices.
\end{theorem}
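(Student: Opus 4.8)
The plan is to reduce the statement to O'Meara's characterization of idempotent products over separative regular rings, with the structural results of the previous section serving only as bookkeeping. Since $R$ is von Neumann regular, so is every matrix ring $M_m(R)$, and each $B\in M_n(R)$ is a regular element, $B=BXB$. Consequently the image and kernel of the endomorphism of $R^n$ determined by $B$ are finitely generated projective direct summands, and over a regular ring the two resulting exact sequences split, giving $R^n\cong \ker B\oplus \operatorname{im}B$ and $R^n\cong \operatorname{im}B\oplus \operatorname{coker}B$. The padded matrix $\begin{pmatrix}B&0\\0&0\end{pmatrix}\in M_{n+r}(R)$ has kernel $\ker B\oplus R^r$ and cokernel $\operatorname{coker}B\oplus R^r$, so the role of the padding $r\ge 1$ is precisely to insert a free rank-$r$ summand into each of these two modules.

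Combining the two split sequences yields the stable relation $\ker B\oplus \operatorname{im}B\cong \operatorname{coker}B\oplus \operatorname{im}B$. From here I would cancel the common summand $\operatorname{im}B$; this is impossible over an arbitrary regular ring, and this cancellation is exactly where separativity is needed and where I expect the only genuine difficulty to lie. The crucial observation is that the free summand supplied by the padding removes the usual obstruction to separative cancellation: since $\operatorname{im}B$ is a direct summand of $R^n$, and $R^n$ is isomorphic to a direct summand of a finite power of $R^r$, the module $\operatorname{im}B$ is isomorphic to a direct summand of $(\ker B\oplus R^r)^k$ and of $(\operatorname{coker}B\oplus R^r)^k$ for some $k$. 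These are exactly the subisomorphism hypotheses under which separativity forces cancellation, so applying it to $(\ker B\oplus R^r)\oplus \operatorname{im}B\cong(\operatorname{coker}B\oplus R^r)\oplus \operatorname{im}B$ delivers $\ker B\oplus R^r\cong \operatorname{coker}B\oplus R^r$. In particular the kernel and the cokernel of the padded matrix are isomorphic.

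With this isomorphism established, the padded matrix is a singular square matrix over a separative regular ring whose kernel and cokernel agree, which is the situation resolved by O'Meara's theorem; invoking it produces the desired factorization into idempotents. Alternatively, the same module data can be routed through Proposition \ref{B-A A of rank r}: writing $B=M-A$ with $A=CD$, $C\in M_{n\times r}$ and $D\in M_{r\times n}$ a rank-$\le r$ correction and $M=B+A$ a matrix that O'Meara's result places in $\pie(M_n(R))$, that proposition again yields that $\begin{pmatrix}B&0\\0&0\end{pmatrix}$ is a product of idempotents. Either way, the entire substance of the argument is the separative cancellation of the second paragraph, for which the hypothesis $r\ge 1$ is indispensable; everything else is routine manipulation with the tools already available.
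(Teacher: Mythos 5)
Your separative-cancellation argument in the second paragraph is correct as far as it goes (the free summand $R^r$ does make $\operatorname{im}B$ subisomorphic to finite powers of both sides, so separativity yields $\ker B\oplus R^r\cong \operatorname{coker}B\oplus R^r$), but the step where you actually conclude is where the proof breaks. O'Meara's theorem for separative regular rings is \emph{not} ``a singular square matrix whose kernel and cokernel agree is a product of idempotents.'' The criterion in \cite{M3}, which is what the paper invokes, is ideal-theoretic: $a\in\pie(S)$ if and only if $lann(a)S=S\,rann(a)=S(1-a)S$. The isomorphism of kernel and cokernel does not imply these equalities. A concrete counterexample: in the unit-regular (hence separative regular) ring $S=F\times F$, the element $a=(c,0)$ with $c\ne 0,1$ has $l(a)=r(a)=0\times F$, so its kernel and cokernel are isomorphic, yet the only products of idempotents in $S$ are the four idempotents themselves, so $a\notin\pie(S)$; the failure is detected by the criterion, since $S(1-a)S=S$ while $S\,rann(a)=0\times F$. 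So after your cancellation step you still owe the verification that, for $A=\left(\begin{smallmatrix}B&0\\0&0\end{smallmatrix}\right)$ and $S=M_{n+r}(R)$, the three ideals $lann(A)S$, $S\,rann(A)$ and $S(I_{n+r}-A)S$ all equal $S$. That verification is the entire content of the paper's proof: because $r\ge 1$, $lann(A)$ contains every matrix whose first $n$ columns vanish (and dually for $rann(A)$), and these generate $S$ as a one-sided ideal, while $I_{n+r}-A$ has an identity block of size $r$ and hence generates $S$ as a two-sided ideal. Once this is checked, O'Meara's criterion applies directly and no cancellation of $\operatorname{im}B$ is needed at all.

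Your fallback route through Proposition \ref{B-A A of rank r} is also not a proof as written: to use it you must exhibit an explicit $A=CD$ with $C\in M_{n\times r}(R)$, $D\in M_{r\times n}(R)$ such that $B+A$ is already a product of idempotents in $M_n(R)$, and you give no construction and no reason why O'Meara's result would supply one. For $B$ invertible and $r=1$, producing such a rank-one perturbation is essentially as hard as the theorem itself, so this alternative cannot be waved through as ``routine manipulation.''
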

\begin{proof}
	If the ring $R$ is supposed to be von Neumann regular and separative then, the Morita invariance of these properties implies that the same is true for any matrix ring over $R$.    Let $B$ be any matrix in  $M_n(R)$.   The main result of \cite{M3} shows that the matrix 
	$$
	A=\begin{pmatrix}
		B & 0 \\
		0 & 0
	\end{pmatrix}\in M_{n+r}(R)
	$$
	is a product of idempotent matrices if and only if the following relations between the annihilators are satisfied:
	$$
	lann(A)S=Srann(A)=S(I_{n+r}-A)S
	$$
	where $S=M_{n+r}(R)$.  Let us prove the first equality.
	We let 
	$$
	X=\begin{pmatrix}
		X_1 & X_2 \\
		X_3 & X_4
	\end{pmatrix}\in lann(A)
	$$
	with $X_1\in M_{n}(R)$ and $X_4\in M_r(R)$ and other matrices are of the appropriate size. 
	We notice that if $X_1=0$ and $X_3=0$ then $X\in lann(A)$.   We then get that $lann(A)$ contains the matrices with the first $n$ columns all  zero.
	Since the right ideal of $S$ generated by these matrices is $S=M_{n+r}(R)$, we get that $lann(A)S=S$.  Similarly we also have 
	$Srann(A)=S$.  Since $A=\begin{pmatrix}
		B & 0 \\
		0 & 0
	\end{pmatrix}$, we can show that the ideal generated by  $I_{n+r}-A$ is the ring $S$ itself.  This concludes the proof.  
\end{proof}

\vspace{3mm}

\begin{remark}
{\rm
	It is an open question whether a von Neumann regular ring is always separative.   
	Let $R$ be a von Neumann regular ring.  If there exists a matrix $B\in M_n(R)$ such that the matrix $\begin{pmatrix}
		B & 0 \\
		0 & 0
	\end{pmatrix}\in M_{n+r}(R)$ cannot be written as a product of idempotent matrices then it follows from Theorem \ref{von neumann separative} that this will provide an example of a regular ring $R$ that is not separative.  If $B$ is invertible, this would also answer the question (3) in \cite{JLS}. 
}	
\end{remark}

\vspace{3mm}


Recall that a ring $R$ is a GE ring, as defined by P.M. Cohn (cf. \cite{C} p. 150), if for any $n>0$, the group $GL_n(R)$ is generated by elementary and diagonal matrices.

It was proved in \cite{AGOP} that a separative exchange ring is a GE ring.  In particular, any separative regular ring is GE.   
We now generalize Theorem 
\ref{von neumann separative} when $B$ is invertible.


\begin{theorem}
	If the ring $R$ is a $GE$ ring then for any invertible matrix $B\in M_n(R)$ the matrix 
	$$
	\begin{pmatrix}
		B & 0 \\
		0 & 0
	\end{pmatrix}
	$$
	is a product of idempotent matrices.
\end{theorem}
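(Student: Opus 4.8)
The plan is to use the GE hypothesis to factor the invertible matrix $B$ into the two kinds of matrices already handled by Proposition \ref{somme cases}, and then to push that factorization through the block embedding $\phi\colon M_n(R)\to M_{n+r}(R)$ given by $\phi(X)=\begin{pmatrix} X & 0 \\ 0 & 0\end{pmatrix}$. The upshot is that the theorem becomes a direct corollary of the earlier work once one records that $\phi$ respects products.

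First I would invoke the definition of a GE ring: since $B\in GL_n(R)$, we may write $B=M_1M_2\cdots M_k$, where each factor $M_j\in M_n(R)$ is either an elementary matrix or a diagonal matrix. The next observation is that $\phi$ is multiplicative, i.e. $\phi(X)\phi(Y)=\phi(XY)$, which is immediate from the block computation
$$
\begin{pmatrix} X & 0 \\ 0 & 0\end{pmatrix}\begin{pmatrix} Y & 0 \\ 0 & 0\end{pmatrix}=\begin{pmatrix} XY & 0 \\ 0 & 0\end{pmatrix},
$$
since the off-diagonal and lower-right zero blocks persist under multiplication. Consequently
$\begin{pmatrix} B & 0 \\ 0 & 0\end{pmatrix}=\phi(B)=\phi(M_1)\phi(M_2)\cdots\phi(M_k)$.

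It then remains to treat each factor with Proposition \ref{somme cases}. If $M_j$ is elementary, it is a product (of length one) of elementary matrices, so $\phi(M_j)=\begin{pmatrix} M_j & 0 \\ 0 & 0\end{pmatrix}$ is a product of idempotents by part (2); if $M_j$ is diagonal, then $\phi(M_j)$ is a product of idempotents by part (4). Since $\pie(M_{n+r}(R))$ is closed under multiplication, the product $\phi(M_1)\cdots\phi(M_k)$ is again a product of idempotent matrices, which is exactly the desired conclusion for $\phi(B)$.

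I do not expect any serious obstacle here: the entire content lies in Proposition \ref{somme cases} together with the multiplicativity of $\phi$, and the latter is the one-line block identity displayed above. The only points requiring a word of justification are that a single elementary matrix qualifies for case (2) and a single diagonal matrix for case (4), both of which hold by definition. In effect the theorem generalizes Theorem \ref{von neumann separative} for invertible $B$ precisely because the GE hypothesis reduces an arbitrary invertible $B$ to generators whose block embeddings are already known to lie in $\pie$.
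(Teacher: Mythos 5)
Your argument is correct and follows essentially the same route as the paper: the paper likewise writes $B$ as a product of elementary and invertible diagonal matrices via the GE hypothesis and then appeals to Proposition \ref{somme cases}, parts (2) and (4), for the block factors. Your explicit observation that the embedding $X\mapsto\begin{pmatrix} X & 0 \\ 0 & 0\end{pmatrix}$ is multiplicative is exactly the detail the paper leaves implicit in the phrase ``the conclusion follows easily.''
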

\begin{proof}
	If $n=1$ we can consider the decomposition obtained in the proof of Proposition \ref{somme cases} (4).
	
So suppose that $n>1$, and that $R$ is a GE ring.   Since $B$ is invertible, the matrix $B$ is a product of elementary matrices and invertible diagonal matrices.    The conclusion follows easily from  Proposition \ref{somme cases}.
\end{proof}

%
%
%
%


We will now prove that if the matrix $
A=\begin{pmatrix}
			B & 0 \\
			0 & 0
		\end{pmatrix}$
is of size of at least twice the size of $B$ then $A$  is a product of idempotent matrices.

\begin{proposition}
	Let $A\in M_{l}(R)$ be of the form
	$$
	A=\begin{pmatrix}
		B & 0 \\
		0 & 0
	\end{pmatrix}
	$$ where $B\in M_{n}(R)$ and $l\ge 2n$ then $A$ is a product of idempotent matrices.
\end{proposition}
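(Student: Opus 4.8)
The plan is to reduce directly to Proposition \ref{B-A A of rank r}, exploiting the fact that when $l \ge 2n$ there are at least $n$ extra rows and columns available. Writing $r = l - n$, the hypothesis $l \ge 2n$ is exactly the condition $r \ge n$, which is what gives enough room to factor $B$ through the $r$-dimensional block.

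First I would put, in the notation of Proposition \ref{B-A A of rank r}, the matrix playing the role of the rank-$r$ summand $CD$ equal to $-B$ itself, so that the sum appearing in that proposition becomes $B + (-B) = 0$. The zero matrix is idempotent, hence trivially a product of idempotents, so the hypothesis that ``$B+A$ is a product of idempotent matrices'' is automatically satisfied and costs nothing.

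The only remaining point is to exhibit a factorization $-B = CD$ with $C \in M_{n\times r}(R)$ and $D \in M_{r\times n}(R)$. Since $r \ge n$, I would take
$$
C = \begin{pmatrix} -B & 0 \end{pmatrix} \in M_{n\times r}(R), \qquad D = \begin{pmatrix} I_n \\ 0 \end{pmatrix} \in M_{r\times n}(R),
$$
where the zero blocks have sizes $n\times(r-n)$ and $(r-n)\times n$ respectively; then $CD = -B$. Applying Proposition \ref{B-A A of rank r} with these choices yields that
$$
\begin{pmatrix} B & 0 \\ 0 & 0 \end{pmatrix} \in M_{n+r}(R) = M_{l}(R)
$$
is a product of idempotent matrices, which is exactly the claim.

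There is essentially no hard step: the entire content lies in recognizing that the size condition $l \ge 2n$ translates into the inequality $r \ge n$ needed to pad the factorization of $-B$, and in the observation that choosing the auxiliary matrix to be $-B$ collapses the relevant sum to the idempotent $0$. The one place deserving a moment's care is simply checking that the block sizes in $C$ and $D$ are consistent, i.e.\ that $r - n \ge 0$, which is precisely the standing hypothesis.
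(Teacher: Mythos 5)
Your proof is correct, but it takes a different route from the paper's. The paper first reduces to $l=2n$, then invokes Proposition \ref{permutation and elementary matrices acting on a zero row matrix} to replace $\begin{pmatrix} B & 0 \\ 0 & 0\end{pmatrix}$ by $\begin{pmatrix} 0 & B \\ 0 & 0\end{pmatrix}$, and finishes with the two-factor decomposition $\begin{pmatrix} 0 & B \\ 0 & 0\end{pmatrix}=\begin{pmatrix} I & B \\ 0 & 0\end{pmatrix}\begin{pmatrix} 0 & 0 \\ 0 & I\end{pmatrix}$. You instead feed Proposition \ref{B-A A of rank r} the auxiliary matrix $-B$, which the hypothesis $r=l-n\ge n$ lets you factor as an $n\times r$ by $r\times n$ product via zero-padding, so that the sum in that proposition collapses to $0$, a single idempotent; this is a valid application (all block sizes check out, and $CD=-B$ as claimed). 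Your argument avoids both the reduction to $l=2n$ and the permutation step, handling any $r\ge n$ in one stroke, and if one unwinds the proof of Proposition \ref{B-A A of rank r} in your special case one gets the explicit three-idempotent factorization
$$
\begin{pmatrix} B & 0 \\ 0 & 0\end{pmatrix}=\begin{pmatrix} I_n & -C \\ 0 & 0\end{pmatrix}\begin{pmatrix} 0 & 0 \\ 0 & I_r\end{pmatrix}\begin{pmatrix} I_n & 0 \\ D & 0\end{pmatrix},
$$
which is only marginally longer than the paper's. Both proofs rest on the same underlying observation that $r\ge n$ leaves enough room to park a copy of $B$ in the extra block; the paper's version is slightly more economical in the number of idempotent factors, while yours is more self-contained in that it reuses a result already proved for other purposes rather than the permutation lemma.
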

\begin{proof}
	We first remark that, as in the begining of the proof of Proposition \ref{somme cases},  it is enough to consider the case when $l=2n$.   So we assume that $A\in M_{2n}(R)$.  
	Proposition \ref{permutation and elementary matrices acting on a zero row matrix} shows that	 that the given matrix is a product of idempotents if and only if the matrix 
	$$
	A=\begin{pmatrix}
		0 & B \\
		0 & 0
	\end{pmatrix}
	$$
	is a product of idempotents.
	But for this matrix we have the obvious decompositon
	$$
		A=\begin{pmatrix}
		0 & B \\
		0 & 0
	\end{pmatrix}=\begin{pmatrix}
	I & B \\
	0 & 0
\end{pmatrix}\begin{pmatrix}
0 & 0 \\
0 & I
\end{pmatrix}
	$$
	
\end{proof} 


	\section{totally nonnegative matrices}

We conclude this paper by answering in the negative the open question 2 in \cite{JLS}: is it true that a singular totally nonnegative matrix is a product of nonnegative idempotents?  
 The following example shows that this is not true.  Consider the following example of a singular totally nonnegative matrix
$$
A=\begin{pmatrix}
	\alpha & \alpha & 0 & 0 \\
	0 & 0 & 0 & \alpha \\
	\alpha & 0 & 0 & \alpha \\
	0 & \alpha & 0 & 0
\end{pmatrix} \quad \alpha >0
$$
Let us assume that $A=E_1\dots E_n$ where $E_i\ne I_4$ are nonnegative idempotents.   Then 
$E_1A=A$.  Let us write 
$$
E_1=\begin{pmatrix}
	x_1 & y_1 & z_1 & t_1 \\
	x_2 & y_2 & z_2 & t_2 \\
	x_3 & y_3 & z_3 & t_3\\
	x_4 & y_4 & z_4 & t_4
\end{pmatrix}\quad x_i,y_i,z_i,t_i >0 \quad {\rm for} \; 1\le i \le 4
$$
Comparing the first rows of $E_1$ and $E_1A$, we get $x_1+z_1=1,\; x_1+t_1=1,\; y_1+z_1=0$.  This gives $x_1=1,\;y_1=z_1=t_1=0$.  Continuing in this way with other rows leads to $E_1=I_4$, a contradiction.

\end{document}